\theoremstyle{plain}
\newtheorem{thm}{Theorem}%JK    [section]
\newtheorem{lem}[thm]{Lemma}
\theoremstyle{definition}
\newtheorem{rem}[thm]{Remark}
\newtheorem{ex}[thm]{Example}
\newcommand{\C}{{\mathbb C}}
\renewcommand{\P}{{\mathbb P}}
\newcommand{\Z}{{\mathbb Z}}
\begin{document}
\title{Fano Varieties with Large Degree  Endomorphisms}
\author{J\'anos Koll\'ar and Chenyang Xu}

\maketitle

%%%%%%%%%%%%%%%%%%%%%%%%%%%%%

%%%%%%%%%%%%%%%%%%%%%%%%%%%%%%%%%%%%%%%%%%%%%%%%%%%%%%%%%%%%%%%%%%%%%%%%
\vspace*{6pt}

We work over the complex number field $\C$. An {\it endomorphism} of a projective variety $X$ is a {\it morphism} $f:X\to X$. If $f$ is surjective, then it is automatically finite.

Examples of projective varieties with an endomorphism of degree $>1$
are $\P^n$ and abelian varieties. In fact, these are essentially all known
smooth examples.
When $X$ has non-negative Kodaira dimension, this is discussed in \cite{nz07}.
 It is also conjectured that
if $X$ is a smooth Fano variety of Picard number 1 that
 admits a degree $>1$ endomorphism, then $X\cong\P^n$  \cite{am97}.
This  is confirmed in some cases, including when the dimension is less or equal to 3
 \cite{arv99}, when $X$ is a quasi-homogeneous \cite{hn08}, when $X$ is a large degree hypersurface of a prime Fano manifold of Picard number 1 \cite{ch08}, or when $X$ contains a smooth conic \cite{hm03}.
\cite{zh08} considered Fano varieties with Picard number 1 and,
with terminal singularities that have an endomorphism of degree
$>1$. He conjectured that they are necessarily rational. Our aim
in this note is to point out that this is not quite true and
suggest a slight modification that is consistent with these
examples.

\begin{ex}\label{construction} Any subgroup  $G\subset S_{n+1}$ acts
 on $\P^{n}$ where $S_{n+1}$ acts  by permuting the coordinates.
 The endomorphism $\phi_d$
of raising each coordinate to its $d$-th power commutes with
 this action, so it descends to a degree $d^{n}$  endomorphism of
$X:=\P^{n}/G$.

According to the philosophy of \cite{kl07}, $\P^{n}/G$ should have
terminal singularities for almost all $G$. The precise conditions are
checked in Lemma \ref{terminal}. Some examples where
$\P^{n}/G$ is not rational are given by  \cite{sa85}.
In his examples, $G$ is a nonabelian group  of order $p^9$ for some prime $p$
and $G\subset S_{|G|+1}$ fixes  the last coordinate
and acts by the left $G$-action on itself on the
first $|G|$  coordinates.
\end{ex}

\begin{rem} 1. These examples suggest  the possibility  that if  a
Fano variety $X$ of Picard number 1 has terminal singularities and
admits a degree $>1$ endomorphism then $X$ is  a
quotient of $\P^n$.

2. It seems likely that if $G$ is large enough, then the $\phi_d$
are the only endomorphisms of $\P^n$ that commute with the $G$-action.
Thus we may get examples of Fano varieties which have
very few endomorphisms of degree $>1$.
\end{rem}

\begin{lem}\label{terminal}
 Let $G\subset S_{n}$ be  a subgroup with the induced action on $\P^{n-1}$
 by permuting the coordinates. Assume that none of the elements
 of $G$ is conjuagte (in $ S_{n}$) to one of the premutations
$(12), (123)$ or $(12)(34)$.
Then $\P^{n-1}/G$ has terminal singularities.
\end{lem}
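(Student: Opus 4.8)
The plan is to prove the lemma by applying the Reid--Tai criterion for quotient singularities to the local models of $\P^{n-1}/G$. Given $[v]\in\P^{n-1}$, put $\Gamma=\Stab_G([v])$. Linearizing the finite group action near the fixed point $[v]$, the germ of $\P^{n-1}/G$ at the image of $[v]$ is analytically isomorphic to the germ at the origin of $T_{[v]}\P^{n-1}/\Gamma$, with $\Gamma$ acting linearly; since terminality is an analytically local property, it suffices to prove each such linear quotient is terminal. The cases $n\le 2$ are trivial, so I assume $n\ge 3$.

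Each $h\in\Gamma$ satisfies $h\cdot v=\mu(h)\,v$ for a root of unity $\mu(h)$, and under the identification $T_{[v]}\P^{n-1}=\Hom(\C v,\C^n/\C v)$ the eigenvalues of $h$ on $T_{[v]}\P^{n-1}$ are the products $\mu(h)^{-1}\lambda$, where $\lambda$ runs over the eigenvalues of the permutation matrix $P_h$ acting on $\C^n/\C v$. First I would rule out quasi-reflections: $h$ acts as a quasi-reflection on some $T_{[v]}\P^{n-1}$ precisely when $\mu(h)$ occurs with multiplicity $n-1$ among the eigenvalues of $P_h$ on $\C^n$, and a short inspection of cycle types shows that for $n\ge 3$ this happens only when $h$ is a transposition. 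Since $G$ contains no transposition, $\Gamma$ acts without quasi-reflections, so the Reid--Tai criterion applies in its basic form: $T_{[v]}\P^{n-1}/\Gamma$ is terminal if and only if $\age(h)>1$ for every $h\in\Gamma\setminus\{1\}$, where $\age(h)=\sum_j t_j$ if the eigenvalues of $h$ are $e^{2\pi i t_1},\dots,e^{2\pi i t_m}$ with each $t_j\in[0,1)$.

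The heart of the matter is the age computation, and the one real subtlety there is the twist by $\mu(h)^{-1}$. From the eigenvalue description, $\age(h)$ on $T_{[v]}\P^{n-1}$ equals the corresponding sum for the matrix $\mu(h)^{-1}P_h$ on $\C^n$. Write $h$ as a product of disjoint cycles of lengths $\ell_1,\dots,\ell_k$ (one-cycles included, $\sum_i\ell_i=n$); then $\mu(h)^{-1}P_h$ is block diagonal, and on the block of an $\ell_i$-cycle its eigenvalues are the $\ell_i$ distinct $\ell_i$-th roots of $\mu(h)^{-\ell_i}$ --- $\ell_i$ equally spaced points on the unit circle --- whose $t$-values sum to $\tfrac{\ell_i-1}{2}+c_i$, where $c_i\in[0,1)$ is the $t$-value of $\mu(h)^{-\ell_i}$. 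Therefore
\[
\age(h)\;=\;\sum_{i=1}^{k}\Bigl(\tfrac{\ell_i-1}{2}+c_i\Bigr)\;\ge\;\tfrac12\sum_{i=1}^{k}(\ell_i-1)\;=\;\tfrac{n-k}{2}.
\]
Here $n-k=\sum_i(\ell_i-1)$ is the reflection length of $h$; for $h\ne 1$ it equals $1$ exactly for transpositions and $2$ exactly for $3$-cycles and for products of two disjoint transpositions. Since by hypothesis no element of $G$ is conjugate to $(12)$, $(123)$ or $(12)(34)$, every nontrivial $h\in\Gamma$ has $n-k\ge 3$, hence $\age(h)\ge\tfrac32>1$. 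By Reid--Tai the local quotients are all terminal, so $\P^{n-1}/G$ has terminal singularities.

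I expect the main obstacle to be the bookkeeping in the displayed estimate: if one drops the twist by $\mu(h)^{-1}$ the lower bound weakens and the argument no longer closes at reflection length $3$ --- for instance for $h$ a product of three disjoint transpositions, where the twist is exactly what keeps the age strictly above $1$. The remaining inputs --- that the $\ell$-th roots of a fixed root of unity contribute $\tfrac{\ell-1}{2}+c$ to the age, and the classification of permutations of reflection length $\le 2$ --- are routine and I omit them here.
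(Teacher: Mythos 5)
Your proof is correct and follows essentially the same route as the paper's: diagonalize the permutation matrix, bound the age contribution of each $\ell$-cycle (after twisting by $\mu(h)^{-1}$) from below by $\tfrac{\ell-1}{2}$, sum to get $\age(h)\ge\tfrac{n-k}{2}\ge\tfrac32>1$ once transpositions, $3$-cycles and double transpositions are excluded, and invoke Reid--Tai; your explicit verification that only transpositions can act as quasi-reflections is a point the paper leaves implicit. One quibble with your closing remark: the twist is not what saves the case of three disjoint transpositions, since the untwisted per-cycle bound $\tfrac{\ell-1}{2}$ already yields age $\tfrac32>1$ there (the twist only adds the nonnegative terms $c_i$), but this side comment does not affect the validity of the argument.
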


\begin{proof}
 Let the order of $g$ be $m$. Assume that the action of $g\in G$ on the set
 $\{1,2,...,n\}$ gives cycles of length $n_1,n_2,...,n_k$ with
$\sum_{i=1}^kn_i=n$. So $n_i|m$ and we write $m=n_im_i$.
After a linear transformation of the coordinates, $g$ acts  by
$$(t_1,t_2,...,t_n) \to
(t_1,\xi^{m_1}t_2,...,\xi^{(n_1-1)m_1}t_{n_1},...,\xi^{(n_k-1)m_k}t_n),$$
where $\xi$ is a primitive $m$-th root of unity.

Now we compute the {\it age} of $g$ at a fixed point $x$ of $g$.
(See \cite{ir96}, \cite{re02} for the definition and for
the  Reid-Tai criterion.)
If a fixed point is on the chart $t_p\neq 0$, then, for each $1\le j\le k$,
the contribution of the cycle of length $n_j$  to the age is
 the sum of all numbers $c/m$ where $0\le c <m$ and  $c$  can be written as
 $c=km_j-p$ for some $k\in \Z$. This  sum is bigger or equal to
$$
\frac{m_j}{m}(0+\cdots+(n_j-1)\bigr)=\frac{n_j-1}{2}.
$$
Therefore,
${\rm age}_x(g)\ge\sum_{i=1}^k(n_i-1)/2$
 is always larger than 1 if $g$ is not any one of the cases  listed above.
 The Reid-Tai criterion implies that $\P^{n-1}/G$ has terminal singularities.
\end{proof}

\end{document}